\newcommand{\RN}[1]{%
  \textup{\uppercase\expandafter{\romannumeral#1}}%
}
\newtheorem{theorem}{Theorem}[section]
\newtheorem{proposition}[theorem]{Proposition}
\newtheorem{lemma}[theorem]{Lemma}
\theoremstyle{definition}
\newtheorem{example}[theorem]{Example}
\newtheorem{remark}[theorem]{Remark}
\numberwithin{equation}{section}
\newcommand{\tl}{\hat\lambda}
\newcommand{\W}{\widetilde}
\begin{document}
\pagenumbering{arabic}
%\title{Sigma Delta quantization for images\footnote{PPA no. 63/024,861}}
\title{Perturbation of invariant subspaces for ill-conditioned eigensystem}
\author{He Lyu, Rongrong Wang}
\affil{Michigan State University}

%\email{}
%
%\author{Rongrong Wang}
%\address{Department of  }
%\email{}
%\thanks{R. Wang was supported in part by NSF CCF Grant.}
%
%

%    General info
%\subjclass{Primary ?????, ?????; Secondary ?????}

\date{}

%\keywords{Compressed sensing, sparse representation, stability, redundant dictionary}
\maketitle

%\noindent\textbf{Acknowledgements}
%\\

%\noindent The authors declared that they have no conflicts of interest to this work.
%\\

\begin{abstract}
Given a diagonalizable matrix $A$, we study the stability of its invariant subspaces when its matrix of eigenvectors is ill-conditioned.  Let $\mathcal{X}_1$ be some invariant subspace of $A$ and $X_1$ be the matrix storing the right eigenvectors that spanned $\mathcal{X}_1$. It is generally believed that when the condition number $\kappa_2(X_1)$ gets large,  the corresponding invariant subspace $\mathcal{X}_1$  will become unstable to perturbation. This paper proves that this is not always the case. %Specifically, we show that no matter how large $\kappa_2(X_1)$ is, as long as $\|A\|$ is not
%too large and the eigengap is not too small,  the invariant subspaces $\mathcal{X}_1$ will remain stable.
Specifically, we show that the growth of $\kappa_2(X_1)$ alone is not enough to destroy the stability. As a direct application, our result ensures that when $A$ gets closer to a Jordan form, one may still estimate its invariant subspaces from the noisy data stably.

\textbf{Keywords.} invariant subspaces, perturbation theory

\textbf{MSC Classification Codes.} 47A15, 47A55
%we revisit the problem of invariant subspace perturbation of matrix $A$. As a complementary of classical results, we focus on the difficult case where the subspace of interest is ill-conditioned and the classical results are unable to give informative estimation of the error bounds between the invariant subspaces under perturbation. In this case, we derive tighter bounds than the classical results, which illustrates the numerical observation that the eigen subspace are still robust under small perturbation even its basis has large condition number. Such results are obtained by deriving an equivalent expression of the $\sin\Theta$ angle between the original and perturbed subspaces.

%We revisit the eigen-subspace perturbation problem of complex non-Hermitian matrices and show that  the eigen-subspaces are quite stable to additive perturbation even when their corresponding matrices are close to a Jordan form.  Such results are obtained by deriving an equivalent expression of the $\sin\Theta$ angle between the original and perturbed subspaces.
%\noindent\textbf{Key words:} singular value decomposition, matrix perturbation theory, $\sin\Theta$ theorem, PCA stability analysis
\end{abstract}
\section{Introduction}\label{sec:intro}
Let $A\in\mathbb{C}^{n,n}$ be a diagonalizable matrix. An invariant subspace $\mathcal{X}$ of $A$ is one that satisfies
\[
A\mathcal{X}\subseteq \mathcal{X}.
\]
%Without loss of generality, we assume that the spectral norm of $A$ is bounded by 1, i.e., $$A\in \mathcal{S}=\{S \in \mathbb{C}^{n,n}: \|S\| \leq 1, n\in \mathbb{Z}_+\}.$$
When a small perturbation is added to $A$, its invariant subspace $\mathcal{X}$ will be perturbed accordingly.  The goal of stability analysis is to bound the perturbation of $\mathcal{X}$ in terms of the perturbation of $A$.  

More precisely, any invariant subspace of a diagonalizable matrix is spanned by a subset of the right eigenvectors. Suppose $A=X\Lambda X^{-1}$ is the eigen-decomposition of $A$, and $X$ contains the normalized eigenvectors as columns. %$X\in\mathbb{R}^{n,n}$ and left eigenvector matrix $V\in\mathbb{R}^{n,n}$. The eigenvalues of $A$ are stored in a diagonal matrix $\Lambda\in\mathbb{R}^{n,n}$, the diagonal entries of $\Lambda$ are $\lambda_1,\lambda_2,...,\lambda_n$. 
Suppose $X$ can be partitioned into two blocks $X=[X_1,X_2]$, 
\begin{equation}\label{eq:defination}
A[X_1, X_2] = [X_1, X_2]\begin{pmatrix}
\Lambda_1 & \ \\
\ & \Lambda_2
\end{pmatrix},
\end{equation}
and the corresponding eigenvalues stored in the diagonal matrices $\Lambda_1$ and $\Lambda_2$ are separable. The objective of our study is the perturbation of the invariant subspace $\mathcal{X}_1=span(X_1)$.
%Here $X_1\in\mathbb{R}^{n,r}$ contains the $r$ eigenvectors correspond ing to eigenvalues in $\Lambda_1=diag(\lambda_1,\lambda_2,...,\lambda_r)$, and $X_2$ contains those columns corresponding to the eigenvalues in $\Lambda_2=diag(\lambda_{r+1},\lambda_{r+2},...,\lambda_n)$. Divide $V=[V_1,V_2]$ accordingly, where $V_1\in\mathbb{R}^{n,r}$. Let the perturbed matrix be $\W A=A+\Delta A$, assume that $\W A$ is also diagnolizable and has left and right eigenvector matrices $\W V$ and $\W X$, as well as the eigenvalue matrix $\W \Lambda$. Perform the same decomposition to $\W X, \W\Lambda$, and $\W V$, we obtain the following equations 

%we consider the stability of the invariant subspace $\mathcal{X}_1=span(X_1)$ under perturbation.
\iffalse
%More specifically, any diagonalizable matrix $A$ has a right eigenvector matrix $X$, i.e., $AX=X\Lambda$, where $X\in\mathbb{C}^{n,n}$ and the diagonal matrix $\Lambda\in\mathbb{C}^{n,n}$ contains eigenvalues of $A$. Divide the eigenvector matrix $X$ into two blocks, of which the corresponding eigenvalues are separated,
%\[
%A[X_1, X_2] = [X_1, X_2]\begin{pmatrix}
%\Lambda_1 & \ \\
%\ & \Lambda_2.
%\end{pmatrix}
%\]
%Focusing on the invariant subspace $\mathcal{X}_1$ spanned by the columns of $X_1\in\mathbb{C}^{n,r}$ which consists of $r$ eigenvectors corresponding to $r$ eigenvalues that are presumably separated from the rest, we care about the stability of $\mathcal{X}_1$ under noise. That means, adding a small perturbation $E\in\mathbb{C}^{n,n}$ to $A$, we aim to analyze the distance between $\mathcal{X}_1$ and the corresponding spectral invariant subspace $\W{\mathcal{X}_1}$ of the perturbed matrix $\W A=A+\Delta A$.
\fi

The separation between $\Lambda_1$ and $\Lambda_2$ ensures that, for sufficiently small perturbation, the eigen-decomposition of the perturbed matrix $\W A=A+\Delta A$ has a similar block structure,
\begin{equation}\label{eq:tildeA} \W A[\W X_1, \W X_2] = [\W X_1, \W X_2]\begin{pmatrix}
\W \Lambda_1 & \ \\
\ & \W \Lambda_2
\end{pmatrix},\end{equation}
therefore we can use  $\W{\mathcal{X}}_1=span(\W{X}_1)$ as an estimation of $\mathcal{X}_1$. The estimation error is usually measured by the $\sin\Theta$ distance between $\mathcal{\W X}_1$ and $\mathcal{X}_1$. For a fixed perturbation level,  smaller $\sin\Theta$ distances imply better stability. 
 
\subsection{Motivation}\label{sec:motive}
Since the exact $\sin\Theta$ distance is hard to calculate, a central task in eigen-perturbation analysis is establishing useful upper bounds with simpler expressions {\cite{varah1970computing,stewart1990matrix,greenbaum2020first,karow2014perturbation,ipsen2003note,golub1976ill,demmel1986computing,kato2013perturbation,chatelin2011spectral,gohberg2006invariant,davis1970rotation}}. Explicitly, we prefer upper bounds that are expressed by simple quantities related to $A$ and $\Delta A$, such as $\|\Delta A\|$, $\|A\|$, the condition number of $X$, and the gap between  $\Lambda_1$ and $\Lambda_2$, since these quantities are more likely to be given as prior information and/or are easier to estimate when the exact $A$ is unknown. The most well-known bound is perhaps the one given by the Davis-Kahan theorem \cite{davis1970rotation}, which states that for Hermitian matrices, the  $\sin\Theta$ distance  depends only on $||\Delta A||$ and the eigengap. For non-Hermitian matrices, however, it is believed that the stability of $\mathcal{X}_1$ also depends on the condition number of the eigenvector matrix $X$: an ill-conditioned $X$ would cause instabilities of the invariant subspaces. However, a tight relationship between the condition number and the $\sin\Theta$ distance is yet to be established.

% Then a small bound would suggest that we can confidently use $\mathcal{\W X}_1$ as an estimate of $\mathcal{X}_1$, when only  $\W A$ is observed in practice. %how good of an estimate it is? 

In \cite{stewart1973error}, Stewart discovered that the key quantity that determines the stability of invariant subspaces is not the eigengap nor the condition numbers, but a new quantity called separation defined using the norm of a Sylvester operator related to $A$. %It is through changing the separation do the eigen-gap and the condition numbers influence the stability. 
Despite the efficacy in characterizing the subspace stability,  separation is difficult to estimate when the original matrix $A$ is not fully known, which is unfortunately often the case in practice. In contrast, it is much easier to come up with a rough estimate of (or be given some prior knowledge of) the condition number and the eigengap.  Therefore, characterizing the stability directly in terms of  condition numbers and eigengaps is still of great practical interest.

In this paper, we focus on studying the optimal dependence of the stability of invariant subspaces on the condition numbers, i.e., the relation between $\sin\Theta(\mathcal{X}_1,\W{\mathcal{X}}_1)$ and $\kappa_2(X_1)$. 

This problem has been partially studied in several papers {\cite{ipsen2003note,stewart1973error,stewart1990matrix,varah1970computing}}. 
%Similar results have also been derived by \cite{gohberg2006invariant} in the asymptotic regime when $\Delta A\rightarrow 0$, where the stability of $\mathcal{X}_1$ is also called Lipschitz stablility. Same as the definition of Lipschitz continuity of ordinary functions, Lipschitz stability of an invariant subspace $\mathcal{X}$ under perturbation is defined by the existence of positive constants $K$ and small enough $\epsilon>0$, such that $\|\Delta A\|<\epsilon$ leads to $\|\sin\Theta(\mathcal{X},\W{\mathcal{X}})\|<K\epsilon$. 
As mentioned in \cite{stewart1990matrix}, stemming from Stewart's  tight upper bound \cite{stewart1973error} of the $\tan\Theta$ angles expressed in terms of the separation, one can use a simple relation \eqref{eq:sep_bound} between the separation and the condition numbers to obtain the following bound  (see the derivation in the appendix)%Several other papers (e.g. \cite{}) simplified that bound and derived
\begin{equation}\label{eq:plug_stewart}
    \|\tan\Theta(\mathcal{X}_1,\mathcal{\W X}_1)\|<\frac{2\kappa_2(X_1)\kappa_2(V_2)\|\Delta A\|}{[\delta_0-2\kappa_2(X_1)\kappa_2(V_2)\|\Delta A\|]_+},
\end{equation}
where $[\cdot]_+$ is the positive part of the input, $V_2$ is the second block in $(X^{-1})^*=[V_1,V_2]$ ($^*$ refers to the conjugate transpose), and $\delta_0$ is the eigengap defined as \[\delta_0=\max_{t_0\in\mathbb{C}}\left\{\max\left\{\min_{\lambda\in S(\Lambda_1)}|\lambda-t_0|-\max_{\mu\in S(\Lambda_2)}|\mu-t_0|,\min_{\lambda\in S(\Lambda_2)}|\lambda-t_0|-\max_{\mu\in S(\Lambda_1)}|\mu-t_0|\right\}\right\}.
\]
Here $S(\Lambda_i)$, $i=1,2$ are the sets of eigenvalues contained in $\Lambda_i$, $i=1,2$, respectively, and are called the spectral sets. 
Let us provide some intuition of $\delta_0$ to assist understanding.  $\delta_0>0$ essentially means that there exists a disk in the complex plane that separates $S(\Lambda_1)$ from $S(\Lambda_2)$, i.e., for some $t\in\mathbb{C}$ and radius $\rho>0$, the disk $B(t, \rho)$ satisfies either 
\begin{itemize} 
\item[(i)] $S(\Lambda_2) \subseteq B(t, \rho)$ and $S(\Lambda_1) \subseteq \mathbb{C}\backslash B(t, \rho)$; or 
\item[(ii)] $S(\Lambda_1) \subseteq B(t, \rho)$ and $S(\Lambda_2) \subseteq \mathbb{C}\backslash B(t, \rho)$.
\end{itemize}
In other words, one of two spectral sets is completely inside the disk, and the other is completely outside the disk. %The $\delta_0$ defined above is precisely the largest possible distance from a separating disk to the spectral set outside it, 
%\[\delta_0 =\max\{\max_{{t\in \mathbb{C}, \rho \in \mathbb{R}_+},S(\Lambda_2) \subseteq B(t, \rho)}dist(S(\Lambda_1),B(t, \rho)), \max_{{t\in \mathbb{C}, \rho \in \mathbb{R}_+},S(\Lambda_1) \subseteq B(t, \rho)}dist(S(\Lambda_2),B(t, \rho))\},\] 
%is an equivalent definition.
Comparing $\delta_0$ with another common definition of the gap  $\delta_1:=\min_{\lambda_i\in S(\Lambda_1), \lambda_j\in S(\Lambda_2)}|\lambda_i-\lambda_j|$, we see that $\delta_0\leq\delta_1$. As a consequence, a $\sin\Theta$ bound that  requires  $\delta_0>0$ is likely to be weaker than one that requires $\delta_1>0$. In the literature, despite of the usage of the slightly weaker gap $\delta_0$, \eqref{eq:plug_stewart} provides the best known relation between the $\sin\Theta$ distance and the condition numbers. However, it suggests that $\mathcal{X}_1$ become unstable (0 tolerance of noise) as $\kappa_2(X_1) \rightarrow \infty$. %\textcolor{blue}{(the condition of the theorem by Stewart guarantees the upper bound is at most 1)}. 
 %\cite{gohberg2006invariant} showed that the Lipschitz constant $K$ for $\mathcal{X}_1$ is proportional to both $\kappa_2(X_1)$ and $\kappa_2(V_2)$,
A similar result has been derived in \cite{varah1970computing}. The original result in \cite{varah1970computing} was stated for both diagonalizable matrices and Jordan forms. Here to avoid distractions, we only state it for diagonalizable matrices, that is
\begin{equation}\label{eq:varah_bound}
    \|\sin\Theta(\mathcal{X}_1,\W{\mathcal{X}_1})\|\leq\frac{C_r\|\Delta A\|}{\sigma_{\min}(X)\sigma_{\min}(X_1)}.
\end{equation}
\sloppy Here $C_r$ is a constant depending on $r:= dim(\mathcal{X}_1)$ and the eigengap $\delta_1$  defined above. Recall that we required the columns of $X$ to be normalized, which leads to $1\leq \sigma_{\max}(X_1)=\kappa_2(X_1)\sigma_{\min}(X_1)\leq \sqrt{r} $. From this, we see that the bound \eqref{eq:varah_bound} is no smaller than 
\[
\frac{C_r \kappa_2(X_1) \|\Delta A\|}{\sqrt r \sigma_{\min}(X)}.
\]
Thus \eqref{eq:varah_bound} also suggests an instability of $\mathcal{X}_1$ as $\kappa_2(X_1)\rightarrow \infty$, which is quite pessimistic. In \cite{ipsen2003note}, the author proved that one can replace the absolute error $\|\Delta A\|$ in \eqref{eq:plug_stewart} by a relative error $\|A^{-k} \Delta A \W A^{-l}\|$, where $k$ and $l$ are positive numbers. However, the dependence on the condition numbers was not improved.
%and enriched by Sun \cite{}, and Li \cite{li1994perturbations}. 

To summarize,  \eqref{eq:plug_stewart} is the state-of-the-art relation between the $\sin\Theta$ distance and the condition numbers. We can see that it is quite tight when $\kappa_2(X_1)$ and $\kappa_2(V_2)$ are small. In particular,  when $A$ is Hermitian,  $X$ is orthogonal with one as its condition number. Then \eqref{eq:plug_stewart} reduces to
\begin{equation}
    \|\tan\Theta(\mathcal{X}_1,\mathcal{\W X}_1)\|<\frac{2\|\Delta A\|}{\delta_0-2\|\Delta A\|},
\end{equation}which meets the tight Davis-Kahan's bound \cite{davis1970rotation} ensuring the stability of the subspace with sufficient eigengap from others. 

However, when $\kappa_2(X_1)$ is large, we show that   \eqref{eq:plug_stewart} is no longer tight through the following example, which motivates the main result of this paper. 

\begin{example} \label{eg:1} Consider the following matrix 
\[
A = \left[\begin{matrix} B & {\bf 0} \\  {\bf 0} &  1/2 \end{matrix} \right], \quad  B = \left[\begin{matrix}1 & 1   \\
\epsilon & 1 \end{matrix} \right].  
\]
Assume $\epsilon=o(1)$. Let  $X_1$ be the $3\times 2$ matrix containing the two eigenvectors of $A$ corresponding to the block $B$.  We want to find the stability of $\mathcal{X}_1=span(X_1)$.
\end{example}
Since $B$ is close to a Jordan block, $\kappa_2(X_1)$ must be large. To verify it, we first obtain the closed-form expressions for $X_1$ and $X_2$  
\[
X_1 =\frac{1}{\sqrt{1+\epsilon}}\begin{pmatrix}
1 & 1 \\
\epsilon^{\frac{1}{2}} & -\epsilon^{\frac{1}{2}} \\
0 & 0
\end{pmatrix}, \quad X_2 =\begin{pmatrix}
0 \\
0\\
1
\end{pmatrix}.
\]
Then this immediately implies $\mathcal{X}_1= span\{e_1,e_2\}$ ($e_i$, $i=1,2$ are the canonical basis vectors) and $\kappa_2 (X_1)=\epsilon^{-1/2}\gg 1$.

In addition, the gap is sufficiently large, since the eigenvalues in $\Lambda_1$ are $1\pm \epsilon^{1/2} $, which are sufficiently separated from $\Lambda_2=1/2$.

With a general perturbation $\Delta A$, there is no closed-form expression for $\W{\mathcal{X}}_1$. We therefore turn to some special perturbations and hope to use them to make our point. Let $E_{i,j}$ be the $3\times 3$ matrix whose $(i,j)$th entry equals 1 and other entries equal 0. Consider special perturbations of the form $\Delta A = \epsilon_1 E_{i,j}$ with $i,j\in\{1,2,3\}$, and assume $\epsilon_1 =o(1)$ is a different small constant than $\epsilon$. If $(i,j) \in \{(1,1),(1,2),(2,1),(2,2),(1,3),(2,3),(3,3)\}$, one can verify that $\W{\mathcal{X}}_1$ is exactly the same as $\mathcal{X}_1$, so $\|\sin\Theta(\mathcal{X}_1, \W{\mathcal{X}}_1)\|=0$. Else, $(i,j) = (3,1)$ and $(i,j) = (3,2)$. One can verify that the eigenvalues of $A$ do not change under the perturbation. If $(i,j)=(3,1)$, then the perturbed eigenvectors are
\[
%\W{A} =\left[\begin{matrix}
%1 & 1 & 0 \\ 
%\epsilon & 1 & 0 \\ 
%\epsilon_1 & 0&1/2
%\end{matrix} \right], 
\quad \W X_1=normalize\left(\left[\begin{matrix}
1 & 1 \\
\epsilon^{1/2} & -\epsilon^{1/2} \\
\frac{2\epsilon_1}{1+2\sqrt{\epsilon}} & \frac{2\epsilon_1}{1-2\sqrt{\epsilon}} 
\end{matrix} \right]\right),
\]
where $normalize$ stands for column-wise normalization. We can directly compute the distance between $X_1$ and $\W X_1$ to get
\[
\|\sin\Theta(\mathcal{X}_1, \W{\mathcal{X}}_1)\|\leq O(\epsilon_1) =O( \|\Delta A\|).
\]
Similarly, for $(i,j)=(3,2)$, the same calculation again yields $\|\sin\Theta(\mathcal{X}_1, \W{\mathcal{X}}_1)\|\leq O(\|\Delta A\|).$

Therefore, for all the special perturbations of the form $\Delta A = \epsilon_1 E_{i,j}$, we have $\|\sin\Theta(\mathcal{X}_1,\W{\mathcal{X}}_1)\| \leq O(\|\Delta A\|)$. Notice that this upper bound does not contain $\kappa_2(X_1) = \epsilon^{-1/2}$,  suggesting that the bound \eqref{eq:plug_stewart} that contains $\kappa_2(X_1)$ may be suboptimal, since $\kappa_2(X_1)$ is large. To get additional supporting evidence, we tested random perturbations and summarized the values of $\|\sin\Theta(\mathcal{X}_1, \W{\mathcal{X}}_1)\|$ in Table \ref{table:1}.
\begin{table}[H]
\centering
\begin{tabular}{ccccccc}
\hline
$\epsilon$ & 1e-2 & 1e-4 & 1e-6 & 1e-8 &  1e-10 \\ 
\hline 
Estimated by \eqref{eq:plug_stewart} &5.00e-5 & 4.08e-4 &4.00e-3 & 0.0042 &0.67\\
\hline
True $\sin\Theta$ distance &2.07e-6 & 1.99e-6 &1.99e-6 &1.99e-7 &1.99e-6\\
\hline
\end{tabular}
\caption{Comparison of the true $\sin\Theta$ distance in Example \ref{eg:1} with its upper bound computed from \eqref{eq:plug_stewart} for various values of $\epsilon$. The perturbation matrix $\Delta A$ is a realization of the random Gaussian matrix rescaled to a fixed norm $\|\Delta A\|=\epsilon_1=10^{-6}$. With this fixed $\Delta A$, we let the condition number $\kappa_2(X_1)\rightarrow \infty$ by letting $\epsilon \rightarrow 0$ in Example \ref{eg:1}. We see that the true $\sin\Theta$ distance does not vary with $\epsilon$ while the upper bound \eqref{eq:plug_stewart} does, suggesting the suboptimality of \eqref{eq:plug_stewart}.}\label{table:1}
\end{table}

In the simulation, we added a random perturbation to the matrix $A$ defined in Example \ref{eg:1} and let the $\epsilon$ in $A$ take various values. For each value of $\epsilon$, we compute the true $\sin\Theta$ distance as well as its upper bound in \eqref{eq:plug_stewart}. We observed that the true $\sin\Theta$ distance does not change much as $\epsilon\rightarrow 0$ while the upper bound blows up,  which suggests a suboptimality of the bound. More specifically, the random perturbation $\Delta A$ in this experiment was obtained by re-scaling  an i.i.d. Gaussian matrix to have a spectral norm of $10^{-6}$. Because in Example \ref{eg:1}, we took $X_1$ to be the eigenvectors of $A$ corresponding to the two largest magnitude eigenvalues, we also need to take $\W X_1$ to be the eigenvectors of $\W A$ associated with the two largest magnitude eigenvalues. The true $\sin\Theta$ distance in the table was computed by $\sqrt{1-\sigma_{min}^2\left(Q_{X_1}^*Q_{\W X_1}\right)}$, which equals $\left\|\sin\Theta(Q_{X_1},Q_{\W X_1})\right\|$.%  and applied QR decomposition to obtain $Q_{X_1}$. $Q_{\W X_1}$ was generated similarly from $\W A$. 

\subsection{Contribution}
Both the theoretical argument for special perturbations and the numerical results for random perturbations suggest that for the matrix $A$ in Example \ref{eg:1}, the perturbation of $\mathcal{X}_1$ is $O(\epsilon_1)=O(\|\Delta A\|)$, which is unaffected by the large condition number $\kappa_2 (X_1) = \epsilon^{-1/2}$ as $\epsilon\rightarrow 0$. %This is saying, $\mathcal{X}_1$ is much more stable than what is prediction from \eqref{eq:plug_stewart}.

In this paper, we show that this phenomenon is no coincidence, and $\kappa_2(X_1)$ can indeed be removed from the previous bound \eqref{eq:plug_stewart}. Our result states that 
\[
\|\sin\Theta(\mathcal{X}_1,\W{\mathcal{X}}_1)\|\leq \kappa_2(V_2)f\left(\|A\|,\|\Delta A\|, \delta_{1},r\right),
\] where $f$ is some function of $\|A\|,\|\Delta A\|, \delta_{1}$, and $r\equiv dim(\mathcal{X}_1)$. The new bound ensures that the stability will not keep getting worse as $\kappa_2(X_1)\rightarrow \infty$.  In particular, when $A$ approaches a Jordan form, we may still stably estimate its invariant subspaces from noisy data. Note that there is a previous result \cite{varah1970computing} that guaranteed the stability of invariant subspaces for deficient matrices (which correspond to $\kappa_2(X_1)=\infty$), while our result holds for the much larger class of diagonalizable matrices with large condition numbers. The proof technique is also completely different.

The structure of the rest of the paper is organized as follows. In Section 2, we set up the notation, and our main result is presented in Section 3.
\section{Notation \& Assumptions}\label{sec:notations}
Let $A,\W A,X,\W X$ be the same as defined before Section \ref{sec:motive}. Define $V:=(X^{-1})^*$ and $\W V:=(\W X^{-1})^*$, then $V^*X=\W V^*\W{X}=I$. For any matrix $Z$, we use $Z = Q_{Z}R_{Z}$ to denote the $QR$ decompositions of $Z$, $S(Z)$ to denote the spectrum of $Z$, $\rho(Z)$ the spectral radius of $Z$, and $\#(\cdot)$ the cardinality of a set. We denote by $\|\cdot\|$ the $\ell_2$-norm of vectors and the spectral norm of matrices, by $\|\cdot\|_F$ the Frobenius norm of matrices. In addition, for two functions $f(t)$ and $g(t)$, $f=O(g)$ means that $f$ is asymptotically upper bounded by $g$ as $t\rightarrow 0$, and $f=\Omega(g)$ means that $f$ is  asymptotically lower bounded by $g$ as $t\rightarrow 0$ . Furthermore, we use $e_i$ to denote the $i$th canonical basis vector, and $B(c,r)$ to denote a disk in complex plane centered at $c$ with radius $r$. For a matrix $A$, $A^*$ is its conjugate transpose,  $\kappa_2(A)=\frac{\sigma_{\max}(A)}{\sigma_{\min}(A)}$ is the condition number, and $\sigma_{\max}(A)$ and $\sigma_{\min}(A)$ are the largest and smallest singular values of $A$, respectively. 

In addition, we assume that the spectra of $A$ and $\W A$ have gaps. Since the study of the eigenvalue perturbation is out of the scope of this paper, we simply make the existence of eigengaps as an assumption.

\textbf{Assumption 1 [Eigengap]:} Suppose $S(\Lambda_1)$ and $S(\Lambda_2)$ are well-separated in the sense of $\min_{\lambda \in S(\Lambda_1),\sigma\in S(\Lambda_2)}|\lambda-\sigma|>0 $.

%\textbf{Assumption 1 [Eigen-gap]:} Suppose $S(\Lambda_1)$ and $S(\Lambda_2)$ are well separated with some eigen-gap $\delta_1>0$. More explicitly, assume there exist some center $c$ and radius $\rho$ such that $S(\Lambda_1)$ are contained in a disk of radius $\rho$, centered at $c$, while $S(\Lambda_2)$ is disjoint from the open disk of radius $\rho+\delta_1$, with the same center $c$. \textcolor{red}{this assumption is stronger than $\delta_{\lambda}>\delta_1$, do we really need it?}

The following assumption assumes that the gap still exists after perturbation. 

\textbf{Assumption 2 [Eigengap under perturbation]:}  Suppose $S(\W \Lambda_1)$ and $S(\Lambda_2)$ are well-separated with some eigengap $\delta_\lambda>0$. More explicitly,  $0<\delta_\lambda:= \min_{\lambda \in S(\W\Lambda_1),\sigma\in S(\Lambda_2)}|\lambda-\sigma| $.
%there exist some $c$, $\rho>0$, for which $\sigma_1(\W A)$ is within the open disk centered at $c$ of radius $\rho$, while  $\sigma_2( A)$ is disjoint from the open disk centered at $c$ of radius $\rho+\delta_\lambda$.  

%In order to relate Stewart's upper bound to separation and condition numbers, we need the following assumption, which is stronger than Assumption 1.

% \textbf{Assumption 3:} Suppose $S(\Lambda_1)$ and $S(\Lambda_2)$ are well-separated, in the sense that there exists a ball $B(t_0, \rho_0)$ in the complex plane $\mathbb{C}$, such that one of the following conditions are satisfied
% \begin{itemize} 
% \item[(i)] $S(\Lambda_2) \subseteq B(t_0, \rho_0)$ and $S(\Lambda_1) \subseteq \mathbb{C}\backslash B(t_0, \rho_0)$; or 
% \item[(ii)] $S(\Lambda_1) \subseteq B(t_0, \rho_0)$ and $S(\Lambda_2) \subseteq \mathbb{C}\backslash B(t_0, \rho_0)$. 
% \end{itemize}
% If assumption 3 holds, we define the gap
% \[\delta_0=\max_{t_0\in\mathbb{C}}\left\{\max\left\{\min_{\lambda\in S(\Lambda_1)}|\lambda-t_0|-\max_{\mu\in S(\Lambda_2)}|\mu-t_0|,\max_{\lambda\in S(\Lambda_1)}|\lambda-t_0|-\min_{\mu\in S(\Lambda_2)}|\mu-t_0|\right\}\right\}>0.
% \]
To measure the distance between the original and the perturbed eigen-subspaces $\mathcal{X}_1=span(X_1)=span(Q_{X_1})$ and $\W{\mathcal{X}_1}=span(\W X_1)=span(Q_{\W X_1})$, we follow the usual definition of principle angles between two subspaces \cite{bjorck1973numerical}.  Suppose $Q,\widetilde Q \in \mathbb{C}^{n \times r}$ $(n\geq r)$ are two matrices with orthonormal columns. Let the singular values of $Q^*\widetilde Q$ be $\zeta_1\geq\zeta_2\geq\cdots\geq \zeta_r\geq 0$, then $\cos^{-1}{\zeta_i}$, $i=1,\cdots,r$ are the principal angles. Define the $\sin\Theta$ angles between $span(Q)$ and $span(\W Q)$ as
\[
\textrm{sin}\Theta(Q,\widetilde Q)=\text{diag}\left\{\sin\left(\cos^{-1}(\zeta_1)\right),\sin\left(\cos^{-1}(\zeta_2)\right),\cdots,\sin\left(\cos^{-1}(\zeta_r)\right)\right\}.
\]
The distances between the two subspaces are then characterized by $||\sin\Theta(Q,\widetilde Q)||$, where $||\cdot||$ is the spectral norm. Direct calculation gives
\begin{equation}\label{eq:sinform}
\|\sin\Theta(Q,\widetilde Q)\|=\|Q_\perp^*\widetilde Q\|=\|\widetilde Q_\perp^* Q\|,
\end{equation}
where $Q_\perp$ is the orthogonal complement of $Q$. Hence in order to bound the $\sin \Theta$ angles between two subspaces, it is sufficient to bound $\|Q_\perp^*\widetilde Q\|$ or $\|\W Q_\perp^* Q\|$.
\section{Main results}\label{sec:main}
The following theorem improves upon \eqref{eq:plug_stewart} when $\kappa_2(X_1)$ is large.
\begin{theorem}\label{thm:main} Assume $A$ and $\W A$ are diagonalizable matrices with decomposition \eqref{eq:defination} and \eqref{eq:tildeA}, and satisfying Assumption 2 with an eigengap $\delta_\lambda$. Let $r=\#\left(\mathcal{S}(\Lambda_1)\right)$ be the number of eigenvalues in $\Lambda_1$ and denote by $\W \lambda_j$, $j=1,...,r$ the $j$th diagonal element in $\W \Lambda_1$ then we have
\begin{align}\label{eq:main1}
\begin{split}
\left\|\sin \Theta(Q_{X_1},Q_{\W X_1}) \right\| & \leq  \frac{\kappa_2(V_2)\|\Delta A\|_F}{a} \prod_{j=1}^r \left(1+\frac{a}{\min\limits_{\lambda_k\in S(\Lambda_2)}|\W {\lambda}_j - \lambda_{k}|}\right)\\
&\leq \frac{\kappa_2(V_2)\|\Delta A\|_F}{a} \prod_{j=1}^r \left(1+\frac{a}{\delta_\lambda}\right), 
\end{split}
\end{align}
where $a = \|A\|+\|\Delta A\| + \rho(\Lambda_2)$, and $\rho(\Lambda_2)$ is the spectral radius of $\Lambda_2$.
\end{theorem}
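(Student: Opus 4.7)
The plan is to sidestep the ill-conditioning of $\W X_1$ by working with an orthonormal Schur basis of $\W{\mathcal{X}}_1$, reducing the problem to an upper-triangular Sylvester equation whose column-by-column solution telescopes into the advertised product.

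First, since $V_2^*X_1 = 0$ and the two ranges have complementary dimensions, $\mathrm{range}(V_2) = \mathcal{X}_1^\perp$, which gives
\[
\|\sin\Theta(\mathcal{X}_1,\W{\mathcal{X}}_1)\| = \|Q_{V_2}^* Q_{\W X_1}\| \leq \frac{\|V_2^* Q_{\W X_1}\|}{\sigma_{\min}(V_2)}
\]
via $Q_{V_2}^* = R_{V_2}^{-*}V_2^*$ and $\|R_{V_2}^{-1}\| = 1/\sigma_{\min}(V_2)$. I would then choose a Schur decomposition $\W A = \W U\W T\W U^*$ in which $\W\lambda_1,\ldots,\W\lambda_r$ appear first on the diagonal; the leading $r$ columns $\W U_1$ of $\W U$ form an orthonormal basis of $\W{\mathcal{X}}_1$ (so $\W U_1$ replaces $Q_{\W X_1}$ without changing the spectral norm) and satisfy $\W A\W U_1 = \W U_1\W T_{11}$ with $\W T_{11}$ upper triangular with diagonal $(\W\lambda_1,\ldots,\W\lambda_r)$. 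Passing to $\W U_1$ is the key trick, since its singular values are all $1$ regardless of how ill-conditioned $\W X_1$ may be, so the bound will avoid $\kappa_2(\W X_1)$ entirely.

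Next, I would derive a Sylvester equation for $Y := V_2^*\W U_1$. The algebraic fact $V_2^*A = \Lambda_2 V_2^*$ follows from $V_2^*[X_1,X_2] = [0,I]$ together with $A = X\Lambda V^*$. Combined with $\W A\W U_1 = \W U_1\W T_{11}$ and $\W A = A+\Delta A$, this gives
\[
\Lambda_2 Y - Y\W T_{11} = -V_2^*\Delta A\,\W U_1.
\]
The upper-triangularity of $\W T_{11}$ lets one solve column by column: writing $y_j := Y e_j$, the $j$-th column obeys
\[
(\Lambda_2 - \W\lambda_j I)\,y_j = -V_2^*\Delta A\,\W u_j + \sum_{i<j}(\W T_{11})_{ij}\,y_i,
\]
so with $\delta_j := \min_{\lambda_k\in S(\Lambda_2)}|\W\lambda_j-\lambda_k|$, $T := \|\W T_{11}\|$, and $W := \|V_2\|\|\Delta A\|_F$ one obtains $\|y_j\| \leq (W + T\tau_{j-1})/\delta_j$, where $\tau_{j-1} := \sum_{i<j}\|y_i\|$. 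Introducing the shifted sequence $\phi_j := \tau_j + W/T$ converts this inhomogeneous additive recursion into the multiplicative one $\phi_j \leq (1+T/\delta_j)\phi_{j-1}$, which telescopes to $\tau_r \leq (W/T)\prod_{j=1}^r(1+T/\delta_j)$. Since $\|Y\|\leq\|Y\|_F\leq\tau_r$, this yields $\|\sin\Theta\| \leq (\kappa_2(V_2)\|\Delta A\|_F/T)\prod_j(1+T/\delta_j)$; bounding $T \leq \|\W A\| \leq \|A\|+\|\Delta A\| \leq a$ and a term-by-term comparison in the binomial expansion show that $(1/T)[\prod_j(1+T/\delta_j)-1] \leq (1/a)\prod_j(1+a/\delta_j)$, recovering the stated form.

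The main obstacle is the telescoping step: converting the inhomogeneous additive recursion into a clean product requires spotting the right shift $\phi_j = \tau_j + W/T$, and the existence of such a shift is what prevents an extra $r$-dependent factor from appearing. A secondary technical point is that my intermediate bound involves $T = \|\W T_{11}\|$ rather than the larger quantity $a$; rewriting in the theorem's form uses $T \leq a$ combined with the binomial-expansion inequality above, which must be proved by term-wise comparison rather than by direct substitution $T\mapsto a$, since the prefactor $1/T$ is monotone in the opposite direction from the product $\prod_j(1+T/\delta_j)$.
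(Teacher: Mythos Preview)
Your argument is correct and reaches the stated bound, but it follows a genuinely different route from the paper's proof. Both proofs share the same opening move---passing to an orthonormal basis of $\W{\mathcal X}_1$ so that the ill-conditioning of $\W X_1$ never enters---but they diverge in how the resulting equation is solved. The paper keeps the eigenvector basis $\W X_1$ on the right, derives the Hadamard identity $V_2^*\W X_1 = F\circ(V_2^*\Delta A\W X_1)$, and then in Lemma~\ref{lm:m} analyses $M=(F\circ\cdots)R_{\W X_1}^{-1}$ \emph{row by row}: for each $\lambda_{i+r}\in S(\Lambda_2)$ it invokes the characteristic polynomial of $\hat A=\W A-\lambda_{i+r}I$ restricted to $\W{\mathcal X}_1$ to rewrite $m_i^*$ as $V_{2,i}^*\Delta A$ times a polynomial in $\hat A$ applied to $Q_{\W X_1}$, and then estimates that polynomial by the auxiliary $\bar q(a)$. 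You instead pass to a Schur basis $\W U_1$, obtain the Sylvester equation $\Lambda_2 Y - Y\W T_{11} = -V_2^*\Delta A\,\W U_1$ with upper-triangular $\W T_{11}$, and solve it \emph{column by column}, turning the forward-substitution into a multiplicative recursion via the shift $\phi_j=\tau_j+W/T$. Your approach is arguably more elementary---no polynomial identity is needed, only back-substitution and a telescoping trick---and it yields the slightly sharper intermediate constant $T=\|\W T_{11}\|\le \|A\|+\|\Delta A\|$ in place of the paper's $a=\|A\|+\|\Delta A\|+\rho(\Lambda_2)$ (the extra $\rho(\Lambda_2)$ in the paper comes from bounding $\|\hat A\|$, which your route never needs). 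The paper's approach, on the other hand, treats the rows independently and so avoids the recursion entirely; the product over $j$ emerges directly from the factored form of $\bar q$ rather than from telescoping.
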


\begin{remark}
Notice that \eqref{eq:main1} does not contain $\kappa_2(X_1)$, and when being applied to Example \ref{eg:1}, agrees with the numerical observation.
\end{remark}
\iffalse
In practice, sometimes we prefer not to have the perturbed eigenvalues $\W \lambda_j$ in the upper bound. For that purpose, when $\|\Delta A\|$ is sufficiently small, the Bauer-Fike theorem {\textcolor{blue}{(The Bauver-Fike itself doesn't guarantee that $\W\Lambda_1$ and $\W\Lambda_2$ will not overlap)}}
\[
\min_{k}|\W \lambda_j-\lambda_k| \leq \kappa_2(X)\|A\|_2,\ 1\leq k\leq n
\]
and Assumption 1 together implies
\[
\min_{\lambda_k\in \Lambda_2}|\W {\lambda}_j - \lambda_{k}| \geq  \delta_1- \kappa_2(X)\|\Delta A\|_2
\]
with this Theorem 3.1 becomes
\begin{theorem}\label{thm:main} Assume $A$ satisfies Assumption 1 with eigengap $\delta_1 \geq \kappa_2(X)\|\Delta A\|_2$, $\W A = A+\Delta A$. Let $r=|\Lambda_1|$ be the dimension of $\Lambda_1$, then we have 
\begin{equation}\label{eq:main2}
\|\sin \Theta(Q_{X_1},Q_{\W X_1}) \| \leq  \frac{\kappa_2(V_2)\|\Delta A\|_F}{a} \left(\prod_{j=1}^r \left(1+\frac{a}{\delta_1- \kappa_2(X)\|A\|_2}\right)-1\right),
\end{equation}
where $a = \|A\|+\|\Delta A\| + \rho(\Lambda_2)$, and $\rho(\Lambda_2)$ is the spectral radius of $\Lambda_2$.
\end{theorem}
\fi
\subsection{Tightness of the bound \eqref{eq:main1}}
At first glance, the bound in \eqref{eq:main1} contains the $r$th power of the eigengap in the denominator, which seems unusual. In this section, we demonstrate that it is actually tight for general matrices. 
\subsubsection*{The dependence  on $\delta_\lambda^r$ is tight}
We give an example showing that the dependence  on $\delta_\lambda$ is tight, i.e., the $r$th power on $\delta_\lambda$ in the upper bound can be attained by the following examples.
\begin{example}
We first present an example for $r=2$.
\[
A = \left[\begin{matrix} 1 & 0 & 0 \\ 1 &  1-\delta & 0 \\ 0 & 0 & 1-2\delta \end{matrix} \right], \quad  \Delta  A = \left[\begin{matrix}0 & 0 & 0   \\
0 & 0 & 0 \\ 0 & \epsilon & 0 \end{matrix} \right].  
\]
Here we set $\epsilon=\min\{o(1),O(\delta^2)\}$, $0<\delta<1$. We consider the perturbation of the subspace $\mathcal{X}_1$ spanned by the largest two eigenvectors of $A$, so $r:=dim(\mathcal{X}_1)=2$. It is immediate to verify that $\mathcal{X}_1 = span(e_1,e_2)$ ($e_i$ is the $i$th canonical basis), $V_2=e_3$, and the eigengap between the largest two and the smallest eigenvalues is $\delta_\lambda=\delta$. With the given perturbation, one can verify that $\lambda_i=\W \lambda_i$, for $i=1,2,3$. The perturbed subspace can also be calculated   
\[
\W{\mathcal{X}}_1 = span\left\{\left[\begin{matrix} 1   \\  \frac{1}{\delta}   \\ \frac{\epsilon}{2\delta^2}   \end{matrix} \right],\left[\begin{matrix} 0  \\    1  \\  \frac{\epsilon}{\delta} \end{matrix} \right]\right\}.
\]
In $\W{\mathcal{X}}_1$, we pick a special vector  that has a large angle with the original subspace ${\mathcal{X}}_1$: pick $x = \left[1,0,-\frac{\epsilon}{2\delta^2}\right]^T \in \W{\mathcal{X}}_1$. One can verify that the sine of the angle between $x$ and $\mathcal{X}_1$ reaches $ \Omega\left(\frac{\epsilon}{\delta^2}\right)= \Omega\left(\kappa_2(V_2)\frac{\|\Delta A\|_F}{\delta_\lambda^r}\right)$, since $\kappa_2(V_2)=1$, $r=2$ and $\|\Delta A\|_F=\epsilon$  (The Big $\Omega$ notation was defined in Section \ref{sec:notations}). Since $\|\sin\Theta({\mathcal{X}}_1,\W{\mathcal{X}}_1)\|$ is at least as large as the $\sin\Theta$ angle between $x$ and $\mathcal{X}_1$, then $\|\sin\Theta({\mathcal{X}}_1,\W{\mathcal{X}}_1) \|\geq \Omega\left(\kappa_2(V_2)\frac{\|\Delta A\|_F}{\delta_\lambda^r}\right)$, and therefore the $r$th power of the eigengap is reached for the case $r=2$.

The above construction can be easily generalized to any $r>2$. %type of examples and be constructed for any $r$, so the power $r$ on the eigengap is tight.
\end{example}
\begin{example}
\[
A = I_{r+1}-\left[\begin{matrix}
0 &  & & & &\\
-1 &  \delta &  & & &\\
 & -1 & 2\delta & & &\\ 
&  & \ddots & \ddots & & \\
& & & -1&(r-1)\delta &  \\
 & & & &0 & r\delta\\
\end{matrix} \right],
\quad  \Delta  A = \left[\begin{matrix}0 & \cdots&\cdots&\cdots & 0   \\
\vdots &\cdots&\cdots& \cdots & \vdots \\ 0 &\cdots& 0&\epsilon & 0 \end{matrix} \right].  
\]
Here except for the $\epsilon$, all entries of the perturbation matrix $\Delta A$ are zero. Again, we set $\epsilon=\min\{o(1),O(\delta^r)\}$, $0<\delta<1$. Consider the perturbation of the subspace spanned by the eigenvectors associated with the largest $r$ eigenvalues, which is the subspace $\mathcal{X}_1=span\left(e_1,e_2,...,e_r\right)$. One can easily verify that $V_2=e_{r+1}$, and the eigengap is still $\delta_\lambda=\delta$. We can again pick a special vector in the perturbed invariant subspace $\W{\mathcal{X}}_1$: $x=\left[1,0,0,\cdots,(-1)^{r-1}\frac{\epsilon}{r!\delta^r}\right]^T$. The angle between $x$ and $\mathcal{X}_1$ then reaches $\Omega\left(\frac{\epsilon}{\delta^r}\right)=\Omega\left(\kappa_2(V_2)\frac{\|\Delta A\|_F}{\delta_\lambda^r}\right)$. Therefore the power $r$ on the eigengap in the denominator of \eqref{eq:main1} is attained.
\end{example}
\subsubsection*{The $\kappa_2(V_2)$ in \eqref{eq:main1} cannot be removed}
The bound in Theorem \ref{thm:main} successfully gets rid of $\kappa_2(X_1)$. The next example shows that we cannot further remove $\kappa_2(V_2)$ from it.
\begin{example} We first consider a matrix of three dimensions.
\[
A = \left[\begin{matrix} 1+\delta & 0 & 0 \\  0 &  1 & 0 \\ 0 & \frac{1}{2} & 1-\delta_1 \end{matrix} \right], \quad  \Delta  A = \left[\begin{matrix}0 & 0 & 0   \\
\epsilon & 0 & 0 \\ 0 & 0 & 0 \end{matrix} \right],  
\]
where $\epsilon=\min\{o(1),O(\delta^2)\}$ and $0<\delta,\delta_1\ll 1$. Consider $\mathcal{X}_1$ to be the 1-dimensional subspace spanned by the eigenvector $\left[1,0,0\right]^T$ of $A$ associated with the largest eigenvalue, so $r=1$. Under the given perturbation, one can check that the perturbed eigenvector associated with the largest eigenvalue is $\left[1,\frac{\epsilon}{\delta},\frac{\epsilon}{2\delta(\delta+\delta_1)}\right]^T$, so $\W{\mathcal{X}}_1 = span\left(\left[1,\frac{\epsilon}{\delta}, \frac{\epsilon}{2\delta(\delta+\delta_1)}\right]^T\right)$. As a result, 
\[
\|\sin\Theta(\mathcal{X}_1,\W{\mathcal{X}}_1)\|= \Omega\left( \frac{\epsilon}{\delta (\delta+\delta_1)}\right).
\]
It is also immediate that $\kappa_2(V_2)= \Omega\left( \frac{1}{\delta_1}\right)$, $\delta_\lambda=\delta$, $\|\Delta A\|_F=\epsilon$ and $\|A\|=O(1)$. Plugging these into the bound \eqref{eq:main1}, we get that the bound is $O\left( \frac{\epsilon}{\delta \delta_1}\right)$. We can see that this bound no smaller than the actual $\sin\Theta$ angle. But if $\kappa_2(V_2)$ was absent from the bound, then  the bound \eqref{eq:main1} would only be $O\left( \frac{\epsilon}{\delta}\right)$, which is no longer enough to bound the actual $\sin\Theta$ angle. Therefore the appearance of $\kappa_2(V_2)$ is necessary. The same idea allows us to construct such examples for any dimension. Specifically, for any dimension $n$, we can define 
\[
A = \left[\begin{matrix} 1+\delta & \ & \ &\ \\  \ &  1 & \ &\ \\ \ & \frac{1}{2} & 1-\delta_1 &\ \\
\ &\ &\ &(1-2\delta_1)I_{n-3}
\end{matrix} \right], \quad  \Delta  A = \left[\begin{matrix} 0& \cdots & 0  \\ 
\epsilon & \vdots &  \vdots\\ 
0& \vdots & \vdots \\
\vdots& \vdots & \vdots\\
0 &\cdots & 0
\end{matrix} \right].  
\]
% \[
%   (\Delta A)_{i,j} =\left\{
%                 \begin{array}{ll}
%                  \epsilon, \ \ (i,j)=(2,1)\\
%                  0, \ \ else
%                     \end{array}
%               \right.
% \]
\end{example}
Here the perturbation matrix $\Delta A$ only contains one nonzero element at its $(2,1)$th entry. Let $\mathcal{X}_1$ to be the subspace spanned by the first eigenvector, so again, $r=1$. Direct calculation gives that $\mathcal{X}_1=span(e_1)$, the eigenvector of $\W A$ associated with the largest eigenvalue is $\left[1,\frac{\epsilon}{\delta},\frac{\epsilon}{2\delta(\delta+\delta_1)},0,...,0\right]^T$, hence $\W{\mathcal{X}_1}=span\left(\left[1,\frac{\epsilon}{\delta},\frac{\epsilon}{2\delta(\delta+\delta_1)},0,...,0\right]^T\right)$. We also have $\kappa_2(V_2)=\Omega\left(\frac{1}{\delta_1}\right)$, $\|\Delta A\|_F=\epsilon$, and $\|A\|=O(1)$. Then same discussion implies that the appearance of $\kappa_2(V_2)$ in the upper bound is necessary.
\subsubsection*{A remark on the $a$ in the bound}
Observe that the bound in \eqref{eq:main1} also contains $a$, which is essentially the spectral norm of $A$. We argue that the presence of $a$ is necessary as it ensures that the bound is scaling invariant. More specifically, replacing $A$ and $\W A$  by $tA$ and $t\W A$ with any scalar $t\neq 0$, we see that our bound in \eqref{eq:main1} does not change, which matches the fact that the angle between the original and the perturbed subspaces is invariant to a universal scaling.
\iffalse
\begin{remark}
The following example shows that the error bound in \eqref{eq:main} is a sharp bound when $r=2$. Consider the matrix
\end{remark}
\fi
\subsection{Proof of the theorem}
In order to prove Theorem \ref{thm:main}, we first state an equivalent expression of $\|\sin\Theta(Q_{X_1},Q_{\W X_1})\|$, 
\begin{equation*}
\|\sin\Theta(Q_{X_1},Q_{\W X_1})\|=\|Q_{V_2}^*Q_{\widetilde X_1}\|.
\end{equation*}
Here $\W X_1=Q_{\W X_1}R_{\W X_1},V_2=Q_{V_2}R_{V_2}$ are the QR decompositions of $\W X_1$ and $V_2$, respectively. The proof is based on the following lemma in \cite{li1994perturbations}.
\begin{lemma}[Lemma 2.1 in \cite{li1994perturbations}] \label{lm:sin}
Let $U_1,\widetilde U_1\in\mathbb{C}^{n,r}\ (1\leq r\leq n-1)$ with $U_1^* U_1={\widetilde U_1}^*\widetilde U_1=I$, and let $\mathcal{X}_1=span(U_1)$ and $\widetilde{\mathcal{X}}_1=span(\widetilde U_1)$. If $\widetilde U=[\widetilde U_1,\widetilde U_2]$ is a unitary matrix, then $\|\sin\Theta(\mathcal{X}_1,\widetilde{\mathcal{X}}_1)\|=\|{\widetilde U_2}^* U_1\|$.
\end{lemma}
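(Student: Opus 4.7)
The plan is to derive the identity by relating the singular values of $M := U_1^*\widetilde U_1$ to those of $N := \widetilde U_2^* U_1$ via the unitarity of $[\widetilde U_1,\widetilde U_2]$. First I would recall from the definition of the $\sin\Theta$ angles (equation \eqref{eq:sinform} and the surrounding text in Section \ref{sec:notations}) that if $\zeta_1\geq\cdots\geq\zeta_r$ are the singular values of $M$, then the diagonal entries of $\sin\Theta(\mathcal{X}_1,\widetilde{\mathcal{X}}_1)$ are $\sqrt{1-\zeta_i^2}$, so that $\|\sin\Theta(\mathcal{X}_1,\widetilde{\mathcal{X}}_1)\| = \sqrt{1-\zeta_r^2}$. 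The goal is therefore to verify that the largest singular value of $N$ equals this same quantity.

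The next step exploits the unitarity hypothesis. Since $[\widetilde U_1,\widetilde U_2]$ is unitary, it satisfies the resolution of identity $\widetilde U_1\widetilde U_1^* + \widetilde U_2\widetilde U_2^* = I_n$. Sandwiching this between $U_1^*$ on the left and $U_1$ on the right, and using $U_1^*U_1 = I_r$, yields
\begin{equation*}
I_r \;=\; U_1^*\widetilde U_1\widetilde U_1^*U_1 + U_1^*\widetilde U_2\widetilde U_2^*U_1 \;=\; MM^* + N^*N.
\end{equation*}
Consequently $N^*N = I_r - MM^*$. Since $MM^*$ and $I_r - MM^*$ are Hermitian and commute, they share a common eigenbasis, and the eigenvalues of $N^*N$ are exactly $1-\zeta_1^2,\ldots,1-\zeta_r^2$.

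From this the conclusion follows immediately: the singular values of $N$ are $\sqrt{1-\zeta_i^2}$, and hence
\begin{equation*}
\|\widetilde U_2^*U_1\| \;=\; \|N\| \;=\; \sqrt{1-\zeta_r^2} \;=\; \|\sin\Theta(\mathcal{X}_1,\widetilde{\mathcal{X}}_1)\|,
\end{equation*}
which is the desired identity.

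There is no serious obstacle here, as the argument is purely algebraic. The only subtle point worth verifying carefully is that $\widetilde U_2$ is well-defined and has $n-r$ orthonormal columns whenever $1\leq r\leq n-1$ (so that $\widetilde U_1$ can be extended to a unitary matrix); this is guaranteed by the hypothesis $\widetilde U_1^*\widetilde U_1 = I_r$ together with standard completion to an orthonormal basis. Once this is in place, the identity $MM^* + N^*N = I_r$ delivers the matching of singular values in a single line.
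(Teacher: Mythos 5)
Your proof is correct. The paper does not actually prove this lemma; it imports it verbatim from Li (1994) and uses it as a black box, so there is no in-paper argument to compare against. Your derivation is the standard one: the resolution of identity $\widetilde U_1\widetilde U_1^* + \widetilde U_2\widetilde U_2^* = I_n$ sandwiched by $U_1$ gives $MM^* + N^*N = I_r$, from which the singular values of $N$ are $\sqrt{1-\zeta_i^2}$ and $\|N\| = \sqrt{1-\zeta_r^2}$, matching the spectral norm of $\sin\Theta$. One small remark: the sentence about $MM^*$ and $I_r - MM^*$ commuting and sharing an eigenbasis is unnecessary detour — it is immediate that if a Hermitian matrix $H$ has eigenvalues $\zeta_i^2$ then $I_r - H$ has eigenvalues $1-\zeta_i^2$ — but this is a stylistic point, not an error.
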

In Lemma \ref{lm:sin}, let $U_1=Q_{\widetilde X_1}, \W U_1=Q_{X_1}, \W U= [Q_{X_1},Q_{V_2}]$. Noticing that $V^*X=I$, we can verify that $\widetilde U^*\widetilde U=I$. Therefore, it holds that $\|\sin\Theta(Q_{X_1},Q_{\W X_1})\|=\|Q_{V_2}^*Q_{\widetilde X_1}\|$.

The next Lemma gives an equivalent expression of $Q_{V_2}^*Q_{\W X_1}$.
\begin{lemma}\label{lm:equation}
Using the notations in Section \ref{sec:notations}, it holds that
\begin{equation}\label{eq:lemma}
    Q_{V_2}^*Q_{\W X_1} = (R_{V_2}^{-1})^*\left(F\circ\left(V_2^*\Delta A\W X_1\right)\right)R_{\W X_1}^{-1},
\end{equation}
where $\circ$ denotes the Hadamard product, and $F \in \mathbb{C}^{n-r,r}$ is defined as $F_{i,j} = \left(\W \lambda_j- \lambda_{r+i}\right)^{-1}$, for $i=1,...,n-r, j=1,...,r$. 
\end{lemma}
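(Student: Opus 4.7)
The plan is to extract a Sylvester equation for the cross Gram matrix $V_2^{*}\W X_1$ and then solve it entry by entry, exploiting the fact that both $\Lambda_2$ and $\W\Lambda_1$ are diagonal. The key observation is that $V_2$ houses the left eigenvectors of $A$ corresponding to $\Lambda_2$: since $V^{*}X=I$ and $A=X\Lambda X^{-1}$, one immediately gets $V^{*}A=\Lambda V^{*}$, and in particular
\[
V_2^{*} A = \Lambda_2\, V_2^{*}.
\]

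First I would start from the perturbed eigen-relation $\W A\,\W X_1 = \W X_1 \W\Lambda_1$, substitute $\W A = A+\Delta A$, and multiply on the left by $V_2^{*}$. Using the identity above, the $A$-term collapses and the remaining equation rearranges to the Sylvester form
\[
(V_2^{*}\W X_1)\,\W\Lambda_1 \;-\; \Lambda_2\,(V_2^{*}\W X_1) \;=\; V_2^{*}\,\Delta A\,\W X_1.
\]
Because both $\W\Lambda_1$ and $\Lambda_2$ are diagonal, this equation decouples entrywise: the $(i,j)$-entry of $V_2^{*}\W X_1$ equals the $(i,j)$-entry of the right-hand side divided by $\W\lambda_j-\lambda_{r+i}$. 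Assumption 2 guarantees each such denominator is nonzero, so this inversion is legitimate and yields
\[
V_2^{*}\W X_1 \;=\; F\circ\bigl(V_2^{*}\,\Delta A\,\W X_1\bigr),
\]
with $F_{i,j}=(\W\lambda_j-\lambda_{r+i})^{-1}$ exactly as in the statement.

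Finally, I would insert the QR decompositions $V_2=Q_{V_2}R_{V_2}$ and $\W X_1=Q_{\W X_1}R_{\W X_1}$ into the left-hand side to get $V_2^{*}\W X_1 = R_{V_2}^{*}\,Q_{V_2}^{*}Q_{\W X_1}\,R_{\W X_1}$, and then solve for $Q_{V_2}^{*}Q_{\W X_1}$ by multiplying on the left by $(R_{V_2}^{*})^{-1}=(R_{V_2}^{-1})^{*}$ and on the right by $R_{\W X_1}^{-1}$. Invertibility of $R_{V_2}$ and $R_{\W X_1}$ is immediate because $V_2$ and $\W X_1$ have full column rank (they are blocks of invertible eigenvector matrices). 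Matching the result against \eqref{eq:lemma} finishes the proof.

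The proof is essentially mechanical once one spots the Sylvester structure; I do not anticipate a real obstacle. The only subtle point is making sure that $F$ is well-defined, i.e.\ that no denominator $\W\lambda_j-\lambda_{r+i}$ vanishes, which is guaranteed by Assumption 2 ($\delta_\lambda>0$). The role played by $V_2$ being the left-eigenbasis of $A$ (rather than some arbitrary complement) is what lets the $A$-term drop out cleanly and produce a diagonal Sylvester operator, which in turn is what makes the entrywise inversion by $F$ possible.
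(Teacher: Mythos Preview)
Your proposal is correct and follows essentially the same approach as the paper's primary proof: derive the Sylvester equation $(V_2^{*}\W X_1)\W\Lambda_1-\Lambda_2(V_2^{*}\W X_1)=V_2^{*}\Delta A\,\W X_1$, solve it entrywise using diagonality and Assumption~2, and then peel off the $R$-factors from the QR decompositions. The only cosmetic difference is that the paper obtains the Sylvester relation by computing the full matrix $X^{-1}\Delta A\,\W X=-\Lambda X^{-1}\W X+X^{-1}\W X\,\W\Lambda$ and extracting the lower-left block, whereas you multiply the perturbed eigen-relation directly by $V_2^{*}$; the resulting equation and the remainder of the argument are identical.
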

Lemma \ref{lm:equation} has been implicitly derived in \cite{li1998spectral}, here for completeness, we provide its proof here. An alternative proof using complex analysis can be found in the appendix.
\begin{proof}
Since $X^{-1}AX=\Lambda$ and $\ {\W X}^{-1} \W A\W X=\W\Lambda$,
then 
\begin{align*}
    X^{-1}\Delta A\W X=-X^{-1}(A-\W A)\W X=-\Lambda X^{-1}\W X+X^{-1}\W X\W\Lambda.
\end{align*}
Consider the $(n-r) \times r$ block in the lower-left corner of this equation, we have
\begin{equation}\label{eq:hadamard}
V_2^*\Delta A \W X_1 = \bar F\circ\left(V_2^* \W X_1\right),
\end{equation}
where $\bar F_{i,j}=\W \lambda_j-\lambda_{r+i},\ 1\leq i\leq n-r,\ 1\leq j\leq r$. Let $F = 1/\bar F$, where the division is carried out elementwise, \eqref{eq:hadamard} becomes
\[
V_2^*\W X_1 = F\circ\left(V_2^*\Delta A\W X_1\right).
\]
Last, we replace the $V_2^*$  and $\W X_1$ on the left-hand side with their QR decomposition and move the $R$ factors to the right-hand sides to obtain the equation in the statement of this lemma.
\end{proof}
Denoting $M=\left(F\circ\left(V_2^*\Delta A\W X_1\right)\right)R_{\W X_1}^{-1}$, by Lemma \ref{lm:equation}, we have
\begin{equation}\label{eq:sin_distance}
    \left\|\sin\Theta(Q_{X_1},Q_{\W X_1})\right\|=\left\|Q_{V_2}^*Q_{\W X_1}\right\|=\left\|(R_{V_2}^{-1})^* M\right\|\leq \frac{1}{\sigma_{\min} (R_{V_2})}\left\|M\right\|.
\end{equation}
\iffalse
We consider bounding $\|M\|$, a natural idea is to let $T=F\circ\left(V_2^*\Delta A\W X_1\right)$, then $\left\|V_2^*\Delta A\W X_1\right\|=\left\|T\W\Lambda_1-\Lambda_2T\right\|\geq\min_{1\leq j\leq r,r+1\leq i\leq n}|\W\lambda_j-\lambda_i|\cdot\|T\|:=\frac{1}{\delta_\lambda}\|T\|$. Hence
\[
\|M\|\leq\left\|T\right\|\left\|R_{\W X_1}^{-1}\right\|\leq\frac{1}{\delta_\lambda}\left\|V_2^*\Delta A\W X_1\right\|\left\|R_{\W X_1}^{-1}\right\|\leq\frac{\left\|V_2^*\Delta A\right\|\kappa_2(\W X_1)}{\delta_\lambda}.
\]
However, the error bound above introduces the condition number $\kappa_2(\W X_1)$, which can be large. To avoid this term, we establish the following lemma for an equivalent expression of $M$.
\fi
In order to bound $\|M\|$, we establish the following lemma for an equivalent expression of $M$.
\begin{lemma} \label{lm:m} Denote $M=\left[m_1,m_2,...,m_{n-r}\right]^*$. Then for $1\leq i\leq n-r$, the $i$th row of $M$ can be expressed as
\begin{equation}\label{eq:mi}
m_i^* = \frac{1}{(-1)^{r+1}\sigma_{r}}  V_{2,i}^*\Delta A  \left(\hat{A}^{r-1}- \sigma_1 \hat{A}^{r-2}+ \sigma_2\hat{A}^{r-3}-\cdots+(-1)^{r-1}\sigma_{r-1}I_n\right)Q_{\W {X}_1},
\end{equation}
where $V_{2,i}$ is the $i$th column of $V_2$, $\hat{A} = \W A - \lambda_{i+r}I_n$, $I_n$ is the identity matrix of size $n$, $\lambda_{i+r}$ is the $i$th diagonal element in $\Lambda_2$, $\sigma_k$ is the homogeneous symmetric polynomial of order $k$ in $r$ variables, that is
\[
\sigma_k = \sum_{1\leq i_1<i_2,...,i_{k-1}<i_k\leq r} \tl_{i_1}\tl_{i_2}\cdots \tl_{i_k},\ k=1,2,...,r.
\]
 Here $\hat{\lambda}_j := \W \lambda_j -\lambda_{i+r}$, $j=1,...,r$, and $\W \lambda_j$ the $j$th diagonal element in $\W \Lambda_1$. By Assumption 2, $\hat{\lambda}_j \neq 0$. 
\end{lemma}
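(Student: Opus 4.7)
The plan is to reinterpret the $i$th row of $M$ as a matrix expression involving a polynomial in $\hat A$, thereby trading the explicit eigenvector matrix $\W X_1$ for the orthogonal factor $Q_{\W X_1}$ plus polynomial action on the matrix $\W A$ itself. First I would unpack the Hadamard product in the definition of $M$. The $i$th row of $F\circ(V_2^*\Delta A\,\W X_1)$ is the row vector $V_{2,i}^*\Delta A\,\W X_1\,D_i$, where $D_i$ is the $r\times r$ diagonal matrix whose $j$th diagonal entry is $1/\hat\lambda_j$ and $\hat\lambda_j=\W\lambda_j-\lambda_{r+i}$. Right-multiplying by $R_{\W X_1}^{-1}$ gives $m_i^* = V_{2,i}^*\Delta A\,\W X_1\,D_i\,R_{\W X_1}^{-1}$, so the lemma reduces to producing a polynomial $q$ of degree $r-1$ with $q(\hat\lambda_j)=(-1)^{r+1}\sigma_r/\hat\lambda_j$ for every $j$; once such a $q$ exists, the eigen-relation $\hat A\,\W x_j=\hat\lambda_j \W x_j$ promotes this scalar identity to $q(\hat A)\,\W X_1=(-1)^{r+1}\sigma_r\,\W X_1\,D_i$.

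To produce $q$, take the monic polynomial $p(z)=\prod_{k=1}^r(z-\hat\lambda_k)=z^r-\sigma_1 z^{r-1}+\sigma_2 z^{r-2}-\cdots+(-1)^r\sigma_r$ whose roots are exactly the shifted eigenvalues $\hat\lambda_k$, and set $q(z):=(p(z)-p(0))/z=z^{r-1}-\sigma_1 z^{r-2}+\cdots+(-1)^{r-1}\sigma_{r-1}$. This matches the polynomial displayed in the lemma verbatim. Since $p(\hat\lambda_j)=0$ and $p(0)=(-1)^r\sigma_r$, we obtain $q(\hat\lambda_j)=-p(0)/\hat\lambda_j=(-1)^{r+1}\sigma_r/\hat\lambda_j$, as required. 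Assumption 2 ensures $|\hat\lambda_j|\geq \delta_\lambda>0$, so $\sigma_r=\prod_j\hat\lambda_j\neq 0$ and the division by $\sigma_r$ in the lemma statement is legitimate.

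Combining these two steps yields $q(\hat A)\,\W X_1=(-1)^{r+1}\sigma_r\,\W X_1\,D_i$, so $\W X_1\,D_i\,R_{\W X_1}^{-1}=\frac{1}{(-1)^{r+1}\sigma_r}q(\hat A)\,\W X_1\,R_{\W X_1}^{-1}=\frac{1}{(-1)^{r+1}\sigma_r}q(\hat A)\,Q_{\W X_1}$. Substituting into the formula $m_i^*=V_{2,i}^*\Delta A\,\W X_1\,D_i\,R_{\W X_1}^{-1}$ gives precisely the identity stated in the lemma. The main conceptual obstacle is identifying the right polynomial; once one recognizes the expression in the lemma as the quotient $(p(z)-p(0))/z$ of the characteristic polynomial of $\W\Lambda_1-\lambda_{r+i}I_r$, the rest is routine substitution. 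A minor bookkeeping subtlety is that both $\sigma_k$ and $\hat A$ depend on $i$ through the shift $\lambda_{r+i}$; this is harmless because the identity is established row by row.
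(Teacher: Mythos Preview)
Your proof is correct and follows essentially the same route as the paper: both construct the polynomial $q(z)=(p(z)-p(0))/z$ from the characteristic polynomial $p(z)=\prod_j(z-\hat\lambda_j)$ and use the eigen-relation $\hat A\,\W x_j=\hat\lambda_j\W x_j$ to convert the diagonal scaling $D_i$ into polynomial action by $q(\hat A)$. Your matrix-level formulation is slightly cleaner than the paper's, which arrives at the same identity via an auxiliary test vector $y$ and the Cayley--Hamilton-type relation $p(\hat A)\W X_1=0$, but the mathematical content is identical.
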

%\begin{remark}
%This lemma shows that although separated by a Hadamard product,  the  $R_{\W X_1}$ and the $R_{\W X_1}^{-1}$ in the expression of $M$ can still  cancel with each other to a large extent. To see more clearly about the existence of cancellation, consider Example \ref{eg:1}. If no cancellation between $R_{\W U_1}$ and $R_{\W U_1}^{-1}$ occured, we would have $\|M\|\sim \|R_{\W U_1}\|\|R_{\W U_1}^{-1}\|\sim O(\epsilon^{-1})$, but  Lemma \ref{lm:m} implies $\|M\|\sim O(1)$, which is much smaller, hence some cancellation must exists. 
%\end{remark}
\begin{proof}[Proof of Lemma \ref{lm:m}]
Let $b_i^*$ be the $i$th row of the $(n-r) \times r$ matrix $V_2^*\Delta A$. Then by Lemma \ref{lm:equation} , the $i$th row of $M$ is 
\begin{equation}\label{eq:m}
m_i^* = \left(b_i^*\W X_1\right) \left[\begin{matrix} \frac{1}{\W \lambda_1-\lambda_{i+r}} &   &  \\ & \ddots &    \\ & &  \frac{1}{\W \lambda_r-\lambda_{i+r}} \end{matrix}\right] R_{\W X_1}^{-1}.
\end{equation}
Let $y$ be an arbitrary unit vector, and let $\hat{\lambda}_j := \W \lambda_j -\lambda_{i+r}$, for $j=1,...,r$. In addition, define $p = R_{\W X_1} ^{-1} y$, which means 
\begin{equation}\label{eq:p} 
\|\W X_1 p \| = \left\|Q_{\W X_1} y\right\| = 1.
\end{equation} Then \eqref{eq:m} yields
\begin{equation}\label{eq:m_y}
m_i^* y = b_i^* \left(\sum_{j=1}^r \frac{1}{\hat{\lambda}_j }p_j \W x_j\right),
\end{equation}
where for $1\leq j\leq r$, $\W x_j$ is the $j$th column of $\W X_1$. Next, we derive an equivalent representation of the summation in \eqref{eq:m_y} with the help of the characteristic polynomial. Define $\hat A=\W A-\lambda_{i+r}I$, and define its characteristic polynomial 
\begin{equation*}
    q(z)=\left(z-\hat \lambda_1\right)\left(z-\hat \lambda_2\right)\cdots\left(z-\hat\lambda_r\right).
\end{equation*}
Expanding $q(z)$ leads to
\begin{equation}\label{eq:charac}
q(z) = z^r- \sigma_1 z^{r-1}+ \sigma_2z^{r-2}-\cdots+(-1)^{r-1}\sigma_{r-1}z+(-1)^{r}\sigma_{r},
\end{equation}
where $\sigma_k$, $k=1,...,r$ are the homogeneous symmetric polynomials of order $k$ in $r$ variables, that is
\[
\sigma_k = \sum_{1\leq i_1<i_2,...,i_{k-1}<i_k\leq r} \tl_{i_1}\tl_{i_2}\cdots \tl_{i_k}.
\]
Notice that $\W X_1$ is also invariant to $\hat A$. Since
\[
\left(\hat A-\hat\lambda_jI\right)\W X_1=\left(\W A-\W\lambda_j\right)\W X_1=\W X_1\left(\W\Lambda_1-\W\lambda_j I_n\right),\ 1\leq j\leq r,
\]
then $q(\hat A)\W X_1=\W X_1\left(\W\Lambda_1-\W\lambda_1 I\right)\left(\W\Lambda_1-\W\lambda_2 I\right)\cdots\left(\W\Lambda_1-\W\lambda_r I\right)=0$. This means for any $c\in\mathbb{C}^r$, we have
\[
0=q\big(\hat A\big)\W X_1c=\left(\hat A^r-\sigma_1\hat A^{r-1}-\cdots+(-1)^{r-1}\sigma_{r-1}\hat A+(-1)^r\sigma_rI_n\right)\W X_1c.
\]
Let us move the last term in the right-hand side to the left and for the terms left on the right, pull one $\hat{A}$ out of the bracket, 
\begin{equation}\label{eq:inter_equa}
(-1)^{r+1}\sigma_{r}\tilde{X}_1c = \left(\hat{A}^{r-1}- \sigma_1 \hat{A}^{r-2}+ \sigma_2\hat{A}^{r-3}-\cdots+(-1)^{r-1}\sigma_{r-1}I_n\right)\hat{A}\W {X}_1c .
\end{equation}
Now let us take $c$ to be the vector consisting of $c_j=p_j/\tl_j$, for $j=1,...,r$, then $\hat{A}\W {X}_1c  = \W {X}_1p$ and $\W {X}_1c=\sum_{j=1}^r \frac{1}{\tl_j}\W {x}_jp_j$. Plugging these two relations into \eqref{eq:inter_equa}, we get
\[
(-1)^{r+1}\sigma_{r}\left(\sum_{j=1}^r \frac{1}{\tl_j}p_j \W {x}_j\right) = \left(\hat{A}^{r-1}- \sigma_1 \hat{A}^{r-2}+ \sigma_2\hat{A}^{r-3}-\cdots+(-1)^{r-1}\sigma_{r-1}I_n\right)\tilde{X}_1p,
\]
or equivalently,
\begin{align}\label{eq:main_identity}
\sum_{j=1}^r \frac{1}{\tl_j}p_j\tilde{x}_j & = \frac{ \left(\hat{A}^{r-1}- \sigma_1 \hat{A}^{r-2}+ \sigma_2\hat{A}^{r-3}-\cdots+(-1)^{r-1}\sigma_{r-1}I_n\right)\tilde{X}_1p}{(-1)^{r+1}\sigma_{r}}. 
\end{align}
Plugging this back to the formula for $m_i^*y$ \eqref{eq:m_y}, we get
\[
m_i^*y = \frac{1}{(-1)^{r+1}\sigma_{r}}  V_{2,i}^*\Delta A \left(\hat{A}^{r-1}- \sigma_1 \hat{A}^{r-2}+ \sigma_2\hat{A}^{r-3}-\cdots+(-1)^{r-1}\sigma_{r-1}I_n\right)Q_{\W X_1}y.
\]
The equation above holds for arbitrary $y\in\mathbb{C}^r$, hence \eqref{eq:mi} holds.
\end{proof}
Next, we prove Theorem \ref{thm:main}.
\begin{proof}[Proof of Theorem \ref{thm:main}]
Denote $b_i^*=V_{2,i}^*\Delta A$, by Lemma \ref{lm:m} we have
\begin{align*}
    \|m_i^*\|&=\left\|b_i^*\frac{\hat{A}^{r-1}- \sigma_1 \hat{A}^{r-2}+ \sigma_2\hat{A}^{r-3}-\cdots+(-1)^{r-1}\sigma_{r-1}I_n}{\sigma_r}Q_{\W {X}_1}\right\|\\
    &\leq \|b_i^*\|\frac{\left\|\hat{A}^{r-1}- \sigma_1 \hat{A}^{r-2}+ \sigma_2\hat{A}^{r-3}-\cdots+(-1)^{r-1}\sigma_{r-1}I_n\right\|}{|\sigma_r|}\\
    &\leq\|b_i^*\|\frac{\|\hat A\|^{r-1}+|\sigma_1|\|\hat A\|^{r-2}+|\sigma_2|\|\hat A\|^{r-3}+\cdots+|\sigma_{r-1}|}{|\sigma_r|}.
\end{align*}
Notice that 
\[
|\sigma_k| = \left|\sum_{1\leq i_1<i_2,...,i_{k-1}<i_k\leq r} \tl_{i_1}\tl_{i_2}\cdots \tl_{i_k}\right|\leq \sum_{1\leq i_1<i_2,...,i_{k-1}<i_k\leq r} |\tl_{i_1}|\cdot|\tl_{i_2}|\cdots|\tl_{i_k}|.
\]
Define an auxiliary function 
\begin{align*}
    \bar q(z)=\left(z+|\hat\lambda_{1}|\right)\left(z+|\hat\lambda_{2}|\right)\cdots\left(z+|\hat\lambda_{r}|\right)=z^r+\bar\sigma_1 z^{r-1}+\bar\sigma_2 z^{r-2}+\cdots+\bar\sigma_r.
\end{align*}
Here $\bar\sigma_k=\sum_{1\leq i_1<i_2<\cdots<i_{k-1}<i_k\leq r}|\hat\lambda_{i_1}|\cdot|\hat\lambda_{i_2}|\cdots|\hat\lambda_{i_k}|\geq|\sigma_k|$, for $1\leq k\leq r$, and $\bar\sigma_r=|\hat\lambda_{1}|\cdot|\hat\lambda_{2}|\cdots|\hat\lambda_{r}|=|\sigma_r|$. Then
\begin{align*}
    \|m_i\|&\leq \|b_i^*\|\frac{\|\hat A\|^{r-1}+\bar\sigma_1\|\hat A\|^{r-2}+\bar\sigma_2\|\hat A\|^{r-3}+\cdots+\bar\sigma_{r-1}}{\bar\sigma_r}\\
    &\leq\|b_i^*\|\frac{\bar q(a)-\bar\sigma_r}{a\bar\sigma_r}\\
    &=\frac{\|b_i\|}{a}\left(\prod_{j=1}^r\left(1+\frac{a}{|\hat \lambda_j|}\right)-1\right)\\
    &\leq\frac{\|b_i\|}{a}\prod_{j=1}^r\left(1+\frac{a}{\min\limits_{\lambda_k\in S(\Lambda_2)}|\W\lambda_j-\lambda_k|}\right),
\end{align*}
where $a=\|A\|+\|\Delta A\|+\rho(\Lambda_2)\geq\|\hat A\|$. Combining the bounds for all $1\leq i\leq n-r$ leads to 
\begin{align*}
\|M\|&\leq \frac{\left\|V_2^*\Delta A\right\|_F}{a} \prod_{j=1}^r\left(1+\frac{a}{\min\limits_{\lambda_k\in S(\Lambda_2)}|\W\lambda_j-\lambda_k|}\right)\leq \frac{\|V_2\|\|\Delta A\|_F}{a} \prod_{j=1}^r\left(1+\frac{a}{\min\limits_{\lambda_k\in S(\Lambda_2)}|\W\lambda_j-\lambda_k|}\right).
\end{align*}
By \eqref{eq:sin_distance}, we further obtain \eqref{eq:main1}.
\end{proof}
\normalem
\bibliographystyle{abbrv}
\bibliography{refs}
\section{Appendix}
In this appendix, we show the derivation of \eqref{eq:plug_stewart} and provide an alternative proof of Lemma \ref{lm:equation}. We start with the definition of the $\tan\Theta$ angles between two subapces $\mathcal{X}$ and $\W{\mathcal{X}}$. Similar to the definition of the $\sin\Theta$ angles, suppose $Q,\W Q\in\mathbb{C}^{n,r}$ are two orthogonal matrices that spanned the two subspaces. Then the cosines of the principal angles between the subspaces are the singular values $\zeta_1\geq\zeta_2\geq\cdots\geq \zeta_r\geq 0$ of $Q^*\widetilde Q$. The $\tan\Theta$ angles are then defined to be the tangents of the principal angles,  $\tan(\cos^{-1}{\zeta_i})$, $i=1,\cdots,r$. The matrix that holds all the $\tan\Theta$ angles  is
\[
\textrm{tan}\Theta(Q,\widetilde Q)=\text{diag}\{\tan(\cos^{-1}(\zeta_1)),\tan(\cos^{-1}(\zeta_2)),\cdots,\tan(\cos^{-1}(\zeta_r))\}.
\]
%For two matrices $B,C$, if $S(B)\cap S(C)=\emptyset$, denote the separation between $B$ and $C$ as 
%\[
%sep(B,C) = \inf_{\|T\|=1}\|TB-CT\|.
%\]
\subsection{Derivation of \eqref{eq:plug_stewart}}
We state a slightly simplified version of 
the classical result by Stewart \cite{stewart1971error}. 
\begin{proposition}[simplified version of Theorem 4.1 \cite{stewart1971error}]\label{thm:stewart}
Provided that 
\begin{equation}\label{eq:condition}
\|\Delta A\|(\|A\|+\|\Delta A\|)<\frac{1}{4}\left(sep(Q_{X_1}^*AQ_{X_1},Q_{V_2}^*AQ_{V_2})-2\|\Delta A\|\right)^2,
\end{equation}
 the following error bound holds
\begin{equation}\label{eq:stewart1}
\|\tan\Theta(Q_{X_1},Q_{\W X_1})\|<2\frac{\|\Delta A\|}{sep(Q_{X_1}^*AQ_{X_1},Q_{V_2}^*AQ_{V_2})-2\|\Delta A\|},
\end{equation}
where for any pair of matrices $L_1$, $L_2$, $sep(L_1,L_2):=\inf_{\|T\|=1} \|TL_1-L_2T\|$.
 \end{proposition}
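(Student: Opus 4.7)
The plan is to follow the classical block-diagonalization plus Riccati-equation approach due to Stewart. The first step is to build the unitary matrix $Q=[Q_{X_1},Q_{V_2}]$; since $V^*X=I$ partitions as $V_2^*X_1=0$, the columns of $Q_{V_2}$ are orthogonal to those of $Q_{X_1}$, so $Q$ is indeed unitary. Using $AX_1=X_1\Lambda_1$ together with $V_2^*X_1=0$ one checks $Q_{V_2}^*AQ_{X_1}=0$, so
\[
Q^*AQ=\begin{pmatrix} L_1 & H \\ 0 & L_2\end{pmatrix},\qquad L_1:=Q_{X_1}^*AQ_{X_1},\ L_2:=Q_{V_2}^*AQ_{V_2},
\]
with $\|H\|\leq\|A\|$. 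Writing $E:=Q^*(\Delta A)Q=\begin{pmatrix}E_{11}&E_{12}\\E_{21}&E_{22}\end{pmatrix}$, we have $\|E_{ij}\|\leq\|\Delta A\|$.

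Next I will parameterize every $r$-dimensional subspace sufficiently close to $\mathcal{X}_1=\mathrm{span}(Q_{X_1})$ in the form $\mathrm{span}\!\left(Q\!\begin{pmatrix}I\\ P\end{pmatrix}\right)$ for some $P\in\mathbb{C}^{(n-r)\times r}$. A standard CS-style computation gives $Q_{X_1}^*Q_{\widetilde X_1}=(I+P^*P)^{-1/2}$, hence the singular values of $P$ are exactly the tangents of the principal angles and $\|\tan\Theta(Q_{X_1},Q_{\widetilde X_1})\|=\|P\|$. Imposing that $\mathrm{span}(Q\binom{I}{P})$ be $\widetilde A$-invariant, i.e., $Q^*\widetilde A Q\binom{I}{P}=\binom{I}{P}M$ for some $M$, the top block yields $M=L_1+E_{11}+(H+E_{12})P$ and the bottom block becomes the quadratic Sylvester (Riccati) equation
\[
L_2 P - P L_1 \;=\; -E_{21} - E_{22}P + PE_{11} + P(H+E_{12})P \;=:\; R(P).
\]

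Now I invoke the definition $\mathrm{sep}(L_1,L_2)=\inf_{\|T\|=1}\|TL_1-L_2T\|$: the Sylvester operator $\Phi:P\mapsto L_2P-PL_1$ is invertible with $\|\Phi^{-1}\|\leq 1/\mathrm{sep}(L_1,L_2)$, so the Riccati equation is equivalent to the fixed point equation $P=\Phi^{-1}(R(P))$. Writing $\gamma:=\mathrm{sep}(L_1,L_2)$ and $\delta:=\|\Delta A\|$, the bound $\|R(P)\|\leq \delta+2\delta\|P\|+(\|A\|+\delta)\|P\|^2$ shows that the fixed-point map carries the ball $\{\|P\|\leq t\}$ into itself whenever
\[
(\|A\|+\delta)\,t^2-(\gamma-2\delta)\,t+\delta\leq 0.
\]
Condition \eqref{eq:condition} is exactly $4\delta(\|A\|+\delta)<(\gamma-2\delta)^2$, the discriminant condition guaranteeing two real positive roots $t_-<t_+$. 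A contraction check on $[0,t_-]$ (using the same Lipschitz estimates together with $t_-<(\gamma-2\delta)/(2(\|A\|+\delta))$) and Banach's fixed point theorem then produce a unique solution $P$ with $\|P\|\leq t_-$.

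Finally I rationalize:
\[
t_- \;=\; \frac{(\gamma-2\delta)-\sqrt{(\gamma-2\delta)^2-4\delta(\|A\|+\delta)}}{2(\|A\|+\delta)} \;=\; \frac{2\delta}{(\gamma-2\delta)+\sqrt{(\gamma-2\delta)^2-4\delta(\|A\|+\delta)}}\;\leq\;\frac{2\delta}{\gamma-2\delta},
\]
which upon substitution $\|\tan\Theta\|=\|P\|\leq t_-$ yields the claimed bound \eqref{eq:stewart1}. The main obstacle, and the part that I expect to require the most care, is the Lipschitz/contraction verification of the nonlinear fixed-point map on the ball of radius $t_-$: the linear pieces of $R$ contribute $2\delta/\gamma$ to the Lipschitz constant while the quadratic term $P(H+E_{12})P$ contributes $2(\|A\|+\delta)t_-/\gamma$, and one must show the sum is strictly less than $1$ precisely when \eqref{eq:condition} holds — this is where the $\frac14$ factor in the hypothesis is used. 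A subsidiary subtlety is justifying that the unique subspace produced this way is genuinely $\widetilde{\mathcal{X}}_1$ (i.e.\ the perturbation of $\mathcal{X}_1$ and not of $\mathcal{X}_2$); this follows from Assumption 2 and a continuity/separation argument on the spectra of $M$ and $L_2+E_{22}-PH-PE_{12}$.
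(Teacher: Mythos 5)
The paper does not supply its own proof of this proposition; it quotes it directly as a (simplified) form of Theorem 4.1 in \cite{stewart1971error}, so there is no internal argument to compare against. What you give is a correct, self-contained reconstruction of Stewart's original proof. The block upper-triangularization $Q^*AQ = \begin{pmatrix} L_1 & H \\ 0 & L_2 \end{pmatrix}$ via the unitary $Q=[Q_{X_1},Q_{V_2}]$ is right (using $V_2^*X_1=0$ and $AX_1=X_1\Lambda_1$), the identification $\|\tan\Theta\|=\|P\|$ for the graph parameterization is the standard CS-decomposition fact, and the Riccati reduction is exactly Stewart's. The algebra also checks: with $\delta=\|\Delta A\|$ and $\gamma=\mathrm{sep}(L_1,L_2)$, the self-map condition $(\|A\|+\delta)t^2-(\gamma-2\delta)t+\delta\leq 0$ admits a positive root $t_-$ precisely under \eqref{eq:condition}; the rationalized $t_-$ is strictly below $2\delta/(\gamma-2\delta)$; and on the $t_-$-ball the Lipschitz constant equals $1-\gamma^{-1}\sqrt{(\gamma-2\delta)^2-4\delta(\|A\|+\delta)}<1$, so Banach applies. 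One stylistic note: Stewart's own treatment absorbs the linear terms $-E_{22}P+PE_{11}$ into the Sylvester operator and works with $\mathrm{sep}(L_1+E_{11},L_2+E_{22})\geq\gamma-2\delta$, whereas you keep them inside $R(P)$; both routes yield the same quadratic and the same bound, so this is purely bookkeeping. You are also right to flag the final identification step: the fixed point produces \emph{some} $\widetilde A$-invariant subspace, and recognizing it as $\widetilde{\mathcal{X}}_1$ rather than a different invariant subspace needs a spectral-separation argument; in the paper's setting Assumption 2 supplies the hypothesis that makes this identification legitimate.
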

Since $\|\tan\Theta\|$ is larger than $\|\sin\Theta\|$, this also gives a sin$\Theta$ bound. 

Direct calculation gives that $sep(Q_{X_1}^*AQ_{X_1},Q_{V_2}^*AQ_{V_2})=sep(R_{X_1}\Lambda_1R_{X_1}^{-1},R_{V_2}\Lambda_2R_{V_2}^{-1})$. If we define the eigen-gap
\[
\delta_0 = \max_{t_0\in\mathbb{C}}\left\{\max\left\{\min_{\lambda\in S(\Lambda_1)}|\lambda-t_0|-\max_{\mu\in S(\Lambda_2)}|\mu-t_0|,\min_{\lambda\in S(\Lambda_2)}|\lambda-t_0|-\max_{\mu\in S(\Lambda_1)}|\mu-t_0|\right\}\right\},
\]
then a lower bound for $sep(R_{X_1}\Lambda_1R_{X_1}^{-1},R_{V_2}\Lambda_2R_{V_2}^{-1})$ can be derived by the following inequality 
\begin{equation}\label{eq:sep_bound}
sep(R_{X_1}\Lambda_1R_{X_1}^{-1},R_{V_2}\Lambda_2R_{V_2}^{-1})\geq \frac{sep(\Lambda_1,\Lambda_2)}{\kappa_2(R_{X_1})\kappa_2(R_{V_2})}\geq\frac{\delta_0}{\kappa_2(X_1)\kappa_2(V_2)},
\end{equation}
 where the first inequality is from Chapter V in \cite{stewart1990matrix}. In order to see the second inequality, it is sufficient to show $sep(\Lambda_1,\Lambda_2)\geq\delta_0$. 
%By Assumption 3 and the definition of $\delta_0$, there exists $t_0\in\mathbb{C}$ such that
% \[
% \delta_0 = \max\left\{\min_{\lambda\in S(\Lambda_1)}|\lambda-t_0|-\max_{\mu\in S(\Lambda_2)}|\mu-t_0|,\min_{\lambda\in S(\Lambda_2)}|\lambda-t_0|-\max_{\mu\in S(\Lambda_1)}|\mu-t_0|\right\}>0,
% \]
%without loss of generalization, assume $\delta_0=\min_{\lambda\in S(\Lambda_1)}|\lambda-t_0|-\max_{\mu\in S(\Lambda_2)}|\mu-t_0|$, then
By the definition of $sep$,
\begin{align*}
sep(\Lambda_1,\Lambda_2)&=\inf_{\|T\|=1} \|T\Lambda_1-\Lambda_2T\|\\
&=\inf_{\|T\|=1}\|T(\Lambda_1-t_0I)-(\Lambda_2-t_0I)T\|\\
&\geq \inf_{\|T\|=1}\left\{\min_{\lambda\in S(\Lambda_1)}|\lambda-t_0|\|T\|-\max_{\mu\in S(\Lambda_2)}|\mu-t_0|\|T\|\right\}\\
&=\min_{\lambda\in S(\Lambda_1)}|\lambda-t_0|-\max_{\mu\in S(\Lambda_2)}|\mu-t_0|.
\end{align*}
Similarly,
\[
sep(\Lambda_1,\Lambda_2) \geq \min_{\lambda\in S(\Lambda_2)}|\lambda-t_0|-\max_{\mu\in S(\Lambda_1)}|\mu-t_0|.
\]
Hence we have $sep(\Lambda_1,\Lambda_2)\geq\delta_0$, thus \eqref{eq:sep_bound} holds.

Provided that $\delta_0>2\|\Delta A\|$, plugging \eqref{eq:sep_bound} into \eqref{eq:stewart1} leads to
\begin{equation}\label{eq:plug_stewart1}
    \|\tan\Theta(Q_{X_1},Q_{\W X_1})\|<\frac{2\kappa_2(X_1)\kappa_2(V_2)\|\Delta A\|}{\delta_\lambda-2\kappa_2(X_1)\kappa_2(V_2)\|\Delta A\|}.
\end{equation}
\subsection{Alternative proof of Lemma \ref{lm:equation} using complex analysis}
Assume $(S(\Lambda_1)\cup S(\W\Lambda_1))\cap (S(\Lambda_2)\cup S(\W \Lambda_2))=\emptyset$, then there always exists a positively oriented simple closed curve $\Gamma$ in the complex plane enclosing  %$S(\Lambda_1)=\{\lambda_i,1\leq i\leq r\},\ S(\W \Lambda_1)=\{\W\lambda_i,1\leq i\leq r\}$ while leaving $S(\Lambda_2)=\{\lambda_j,r+1\leq j\leq n\},\ S(\W\Lambda_2)=\{\W\lambda_j,r+1\leq j\leq n\}$ outside.
the eigenvalues in $\Lambda_1$ and $\W\Lambda_1$ while leaving those in $\Lambda_2$ and $\W\Lambda_2$ outside. It has been shown in \cite{kato2013perturbation} that $P_{Q_{X_1}}=\frac{1}{2\pi i}\int_\Gamma (\lambda I-A)^{-1}d\lambda$, where $P_{Q_{X_1}}=Q_{X_1}Q_{X_1}^*$ is the projector matrix onto the subspace spanned by columns in $Q_{X_1}$. Similarly, $P_{Q_{\W X_1}}=Q_{\W X_1} Q_{\W X_1}^*=\frac{1}{2\pi i}\int_\Gamma (\lambda I-\W A)^{-1}d\lambda$, then we have
\begin{align*}
    Q_{V_2}^*Q_{\W X_1}&=Q_{V_2}^*(P_{Q_{X_1}}-P_{Q_{\W X_1}})Q_{\W X_1}\\
    &=\frac{1}{2\pi i}Q_{V_2}^*\left(\int_{\Gamma}\left((\lambda I-A)^{-1}-(\lambda I-\W A)^{-1}\right)d\lambda\right) Q_{\W X_1}\\
    &=-\frac{1}{2\pi i}\int_\Gamma Q_{V_2}^*(\lambda I-A)^{-1}\Delta A(\lambda I-\W A)^{-1}Q_{\W X_1}d\lambda\\
    &=-\frac{1}{2\pi i}\int_\Gamma Q_{V_2}^*X(\lambda I-\Lambda)^{-1}X^{-1}\Delta A\W X(\lambda I-\W\Lambda)^{-1}\W X^{-1}Q_{\W X_1}d\lambda\\
    %&=-\frac{1}{2\pi i}\int_\Gamma\left(X_1(\lambda I-\Lambda_1)^{-1}V_1^*+X_2(\lambda I-\Lambda_2)^{-1}V_2^*\right)\Delta A \left(\W X_1(\lambda I-\W\Lambda_1)^{-1}V_1^*+\W X_2(\lambda I-\W\Lambda_2)^{-1}\W V_2^*\right)d\lambda\\
    %&=-\frac{1}{2\pi i}\int_\Gamma \big(X_1(\lambda I-\Lambda_1)^{-1}V_1^*\Delta A \W X_2(\lambda I -\W\Lambda_2)^{-1}\W V_2^* + X_1(\lambda I-\Lambda_1)^{-1}V_1^*\Delta A \W X_1(\lambda I-\W\Lambda_1)^{-1}V_1^*\\
    &=-\frac{1}{2\pi i}Q_{V_2}^*X_2\left(\int_\Gamma (\lambda I-\Lambda_2)^{-1}V_2^*\Delta A\W X_1(\lambda I-\W\Lambda_1)^{-1}d\lambda\right) \W V_1^*Q_{\W X_1}\\
    &=-(R_{V_2}^{-1})^* \underbrace{\frac{1}{2\pi i}\left(\int_\Gamma (\lambda I-\Lambda_2)^{-1}V_2^*\Delta A\W X_1(\lambda I-\W\Lambda_1)^{-1}d\lambda\right)}_{G}R_{\W X_1}^{-1},
\end{align*}
where the second to last equality used the fact that $V^*X=I,\ Q_{V_2}^*X_1=0$.
 The contour integral $G=\frac{1}{2\pi i}\int_\Gamma (\lambda I-\Lambda_2)^{-1}V_2^*\Delta A\W X_1(\lambda I-\W\Lambda_1)^{-1}d\lambda$ has poles at $\W\lambda_j,1\leq j\leq r$. 
%To figure out the order of the poles, we expand $(\lambda I-\Lambda_2)^{-1}$ around the poles $\W\lambda_j$ with  $1\leq j\leq r$ 
% \begin{align*}
% (\lambda I-\Lambda_2)^{-1}&=\left((\lambda I-\W\lambda_j I) +(\W\lambda_j I-\Lambda_2)\right)^{-1}=\left((\W\lambda_j I-\Lambda_2)(I+(\W\lambda_j I-\Lambda_2)^{-1}(\lambda-\W\lambda_j))\right)^{-1}\\
% &=\left(I+\sum_{k=1}^\infty (-1)^k(\lambda-\W\lambda_j)^k(\W\lambda_j I-\Lambda_2)^{-k}\right)(\W\lambda_j I-\Lambda_2)^{-1}\\
% &=\sum_{k=1}^{\infty}(-1)^{k+1}(\lambda-\W\lambda_j)^{k-1}(\W\lambda_j I-\Lambda_2)^{-k}.
%&=\left(I-(\lambda-\W\lambda_j)(\W\lambda_jI-\Lambda_2)^{-1}+(\lambda-\W\lambda_j)^2(\W\lambda_jI-\Lambda_2)^{-2}+...\right)(\W\lambda_j I-\Lambda_2)^{-1}\\
%&=(\W\lambda_j I-\Lambda_2)^{-1}-(\lambda-\W\lambda_j)(\W\lambda_j I-\Lambda_2)^{-2}+(\lambda-\W\lambda_j)^2(\W\lambda_j I-\Lambda_2)^{-3}+...
%\end{align*}
Hence the $(i,j)$th entry of $G$ can be computed by the Cauchy's  Residue Theorem as 
\begin{align*}
G_{ij}&=\frac{1}{2\pi i}\int_\Gamma
(\lambda-\W\lambda_j)^{-1}(\lambda-\lambda_{i+r})^{-1}(V_2^*\Delta A\W X_1)_{ij}d\lambda=(\W\lambda_j-\lambda_{i+r})^{-1}(V_2^*\Delta A \W X_1)_{ij}.
\end{align*}
Plugging this back into the expression of $Q^*_{V_2}Q_{\W X_1}$ gives \eqref{eq:lemma}.

\end{document}